\author{Azadeh Nikou and Anthony G. O'Farrell}
\address[A. Nikou]{Department of Mathematics, Tarbiat Moallem University, 599 Taleghani Avenue, 15618 Tehran, Iran}
\address[A. O'Farrell]{Department of Mathematics and Statistics, NUI, May\-n\-ooth, Co. Kildare, Ireland}
\email[A. Nikou]{a.nikou81@gmail.com}
\email[A. O'Farrell]{admin@maths.nuim.ie}
\keywords{Banach algebra, function algebra, Ditkin, bounded relative units, continuous, vector-valued, automatic continuity, tensor product.}
\subjclass[2010]{46J99}
\title[Ditkin conditions]
{Ditkin conditions}
\date{\today}
\newtheorem{thm}{Theorem}
\newtheorem{cor}{Corollary}[section]
\newtheorem{lem}[cor]{Lemma}
\newtheorem{prop}[cor]{Proposition}
\theoremstyle{definition}
\newtheorem{example}[cor]{Example}
\newtheorem{definition}{Definition}[section]
\theoremstyle{remark}
\newtheorem{rem}{Remark}[section]
\newcommand{\C}{\mathbb{C}}
\newcommand{\N}{\mathbb{N}}
\newcommand{\clos}{\textup{clos}}
\newcommand{\coz}{\textup{coz}}
\newcommand{\rad}{\textup{rad}}
\newcommand{\supp}{\textup{supp}}
\begin{document}

\bibliographystyle{amsplain}

\begin{abstract}
This paper is about the connection between certain Banach-algebraic
properties of a commutative Banach algebra $E$ with unit and the associated
commutative Banach algebra $C(X,E)$ of all continuous functions
from a compact Hausdorff space $X$ into $E$.  The properties concern
Ditkin's condition and bounded relative units.
We show that these properties are shared by $E$ and $C(X,E)$.
We also consider the relationship between these properties
in the algebras $E$, $B$ and $\tilde B$ that appear in
so-called admissible quadruples $(X,E,B,\tilde B)$.
\end{abstract}
\maketitle

\section{Introduction and Preliminaries}
\subsection{}
Let $A$ be a  commutative Banach algebra
with unit. The Gelfand transform
$f\mapsto \hat f$ is a unital algebra
homomorphism from $A$ onto
an algebra $\hat A$ 
of continuous complex-valued functions
on its character space $M(A)$, 
the set of nonzero complex-valued multiplicative
linear functionals on $A$, equipped with the
relative weak-star topology from the dual $A^*$.
The kernel of this homomorphism
is the Jacobson radical $\rad(A)$, and
so $\hat A$ is isomorphic to $A$ when
$A$ is semisimple.
See \cite{Dales} for background. 

For
a nonempty compact Hausdorff space $X$ and a Banach algebra $E$,
we let
$C(X,E)$ be the
space of all continuous maps from $X$ into $E$. We define the
{\em uniform norm }on $C(X,E)$ by
$$\Vert f\Vert_{X}={\sup}_{x\in X}\Vert f(x)\Vert,\quad f\in C(X,E). $$
For $f, g \in C(X,E)$ and $\lambda\in \mathbb C$, the pointwise
operations $\lambda f $, $f+g$ and $fg$ in $C(X,E)$ are defined as
usual. It is easy to see that $C(X,E)$, equipped with
the norm $\Vert \cdot \Vert_{X}$ is a
Banach algebra. If $E=\mathbb C$ we get the algebra
$C(X,\mathbb C)=C(X)$ of all continuous complex-valued functions on
$X$.  Hausner \cite{Hausner} showed that if $E$
is a commutative semisimple algebra, then 
$C(X,E)$ is also semisimple, with character space
homeomorphic to $X\times M(E)$.

In this paper, we consider the connection between certain
Banach-algebraic properties of commutative $E$ and of $C(X,E)$.
In many cases, properties of $E$ are inherited by $C(X,E)$.
The properties concerned will be detailed shortly.
We also consider inheritance of properties by
certain subalgebras of $C(X,E)$ called $E$-valued 
function algebras. More specifically, we 
consider $E$-valued function algebras $\tilde B$
that appear in what are called admissible quadruples
$(X,E,B,\tilde B)$.  We now explain this concept. 

\subsection{$E$-valued function algebras}
We recall definitions from our previous paper \cite{NO}:

\begin{definition}
By an {\em $E$-valued function algebra on $X$} we mean
a subalgebra $A\subset C(X,E)$, equipped with
some norm  that makes it complete, such that (1)
$A$ has as an
element the constant function $x\mapsto 1_E$, (2) $A$
separates points on $X$, i.e.
given distinct points $a,b\in X$, there exists
$f\in A$ such that $f(a)\not=f(b)$, and
(3) the evaluation map
$$
e_x: \left\{
\begin{array}{rcl}
A&\to& E,\\
f&\mapsto&f(x)
\end{array}
\right.
$$
is continuous, for each $x\in X$.
\end{definition}

\begin{definition}
By an {\em admissible quadruple} we mean a quadruple
$(X,E,B,\tilde{B})$, where
\begin{enumerate}
\item\label{4p1}
$X$ is a compact Hausdorff space,
\item \label{4p2}$E$ is a commutative Banach algebra with unit,
\item \label{4p3} $B\subset C(X)$ is a natural $\C$-valued function
algebra on $X$,
\item \label{4p4} $\tilde{B}\subset C(X,E)$ is an $E$-valued function
algebra on $X$,
\item \label{4p5}  $B\cdot E \subset \tilde{B}$, and
\item \label{4p6} \label{last}$\{\lambda\circ f, f\in \tilde{B}, \lambda\in M(E)\}\subset B.$
\end{enumerate}
\end{definition}

One example is $(X,E,C(X),C(X,E))$. For other examples, such as 
Lipschitz algebras and algebras associated to $E$-valued
polynomials, rational functions and analytic functions,
see \cite{NO},
and see also Subsection \ref{SS:Tomiyama}.

Given an admissible quadruple $(X,E,B,\tilde{B})$,
we define the {\em associated map} (also
called Hausner's map)
$$\beta:\left\{
\begin{array}{rcl}
X\times M(E) &\to& M(\tilde{B})\\
(x,\psi) &\mapsto& \psi\circ e_x.
\end{array}
\right.
$$
The associated map is always injective.
\begin{definition}
We say that an admissible quadruple $(X,E,B,\tilde{B})$
is {\em natural} if the associated map $\beta$
is bijective.
\end{definition}

Each quadruple of the form $(X,E,C(X),C(X,E))$ is
admissible and natural. This is a more precise statement
of Hausner's lemma \cite[Lemma 2]{Hausner}.

\subsection{Properties} Let $A$ be a commutative Banach algebra
with unit.

Given an element  $a\in A$, 
the {\em cozero set }of $a$ is defined as
$$\coz(a):=\left\{\phi\in M(A) : \hat a(\phi)\neq 0\right\},$$
and the {\em support }$\supp(a)$
as the closure of $\coz(a)$ in $M(A)$.

To a closed set $S\subset M(A)$ are associated two ideals, 
the {\em kernel }of $S$,
$$I_S= I_S(A):=\{a\in A : \hat{a}(S)\subset\{0\}\},$$
and the smaller ideal
$$J_S=J_S(A):=\{a\in A: \supp (\hat{a})\cap S=\emptyset\}.$$
For $\phi\in M(A)$, we abbreviate $I_{\phi}=I_\phi(A):=I_{\{\phi\}}$ 
(a maximal ideal) and
$J_{\phi}=J_\phi(A):=J_{\{\phi\}}$.

$A$ is said to have {\em bounded relative units }if, for every
$\phi\in M(A)$, there exists $m_{\phi}>0$ such that, for
each compact subset $K$ of $M(A)\setminus\{\phi\}$, there exists
$a\in J_{\phi}$ with $\hat{a}(K)\subset\{1\}$ and $\|a\|\leq
m_{\phi}$.

$A$ satisfies {\em Ditkin's condition at $\phi\in M(A)$}
if $a\in \clos({aJ_\phi})$ for all $a\in I_{\phi}$.  \\
$A$ is a {\em Ditkin algebra }if $A$ satisfies Ditkin's
condition at each $\phi\in M(A)$.

$A$ is a {\em strong Ditkin algebra }if $I_{\phi}$ has a
bounded approximate identity contained in $J_{\phi}$ for each
$\phi\in M(A)$, i.e. there exists $m_\phi>0$
and $u_n\in J_\phi$ ($n\in\N$) such that $\|u_n\|\le m_\phi$
for each $n$ and 
$\|a-au_n\|\to0$ for each $a\in I_\phi$.

\subsection{Summary of Results}

\begin{thm}\label{T:Ditkin}
 Let $X$ be a nonempty compact Hausdorff
space and $E$ be a commutative Banach algebra with unit.
Then $C(X,E)$ is Ditkin if and only if $E$
is Ditkin.
\end{thm}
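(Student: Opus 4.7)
The plan is to use Hausner's identification $M(C(X,E))\cong X\times M(E)$, under $(x,\psi)\mapsto \psi\circ e_x$, to transfer Ditkin's condition between the two algebras. Throughout, a character of $C(X,E)$ is written $\phi=(x_0,\psi_0)$. Two building blocks control everything: for $h\in C(X)$, the element $h\cdot 1_E$ of $C(X,E)$ (which we also denote by $h$ when context is clear) lies in $J_\phi$ iff $h$ vanishes on a neighbourhood of $x_0$; and for $b\in E$, the constant function $\mathbf{b}\colon x\mapsto b$ lies in $J_\phi$ iff $b\in J_{\psi_0}(E)$. Both come at once from $\widehat{h\cdot 1_E}(y,\psi)=h(y)$ and $\widehat{\mathbf{b}}(y,\psi)=\psi(b)$.

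For the forward direction, assume $C(X,E)$ is Ditkin. Given $\psi\in M(E)$ and $a\in I_\psi(E)$, fix any $x_0\in X$; the constant function $\mathbf{a}$ lies in $I_{(x_0,\psi)}$. Ditkin for $C(X,E)$ produces $g_n\in J_{(x_0,\psi)}$ with $\|\mathbf{a}-\mathbf{a}g_n\|_X\to 0$. Extracting a product neighbourhood $V\times W$ of $(x_0,\psi)$ on which $\hat g_n$ vanishes shows $g_n(x_0)\in J_\psi(E)$, and evaluating the norm estimate at $x_0$ gives $\|a-ag_n(x_0)\|_E\to 0$.

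For the converse, which is the main work, assume $E$ is Ditkin and fix $\phi=(x_0,\psi_0)$, $f\in I_\phi(C(X,E))$, and $\epsilon>0$. Since $f(x_0)\in I_{\psi_0}(E)$, Ditkin for $E$ yields $b\in J_{\psi_0}(E)$ with $\|f(x_0)-f(x_0)b\|_E<\epsilon/2$, and then $\mathbf{b}\in J_\phi$. Using continuity of $f$ at $x_0$ and normality of $X$, pick $h\in C(X)$, $0\le h\le 1$, with $h=0$ on a neighbourhood of $x_0$ and $h=1$ off a neighbourhood $V$ chosen small enough that $\|f(y)-f(x_0)\|_E\,\|1-b\|_E<\epsilon/2$ for $y\in V$; then $h\in J_\phi$. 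Set $g:=h+\mathbf{b}-h\mathbf{b}\in J_\phi$, so that $1-g=(1-h)(1-\mathbf{b})$. Then $f-fg$ vanishes off $V$, and on $V$ the identity
\[
f(y)\bigl(1-h(y)\bigr)(1-b)=\bigl(1-h(y)\bigr)\bigl[(f(y)-f(x_0))(1-b)+(f(x_0)-f(x_0)b)\bigr]
\]
together with $0\le 1-h(y)\le 1$ bounds each summand by $\epsilon/2$, giving $\|f-fg\|_X<\epsilon$.

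The main obstacle is the converse, and in particular the combining step. Treating the $X$- and $E$-coordinates additively fails because $\|1-\mathbf{b}\|_X=\|1-b\|_E$ need not be small, so $\mathbf{b}$ alone cannot control all of $f$. The multiplicative combination $1-g=(1-h)(1-\mathbf{b})$ does: the scalar cut-off $h$ localises the defect $f-fg$ to a small neighbourhood of $x_0$, where continuity forces $f(y)$ close to $f(x_0)$, and the Ditkin estimate on $f(x_0)$ then absorbs the factor $\|1-b\|_E$.
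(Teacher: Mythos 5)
Your proof is correct and follows essentially the same route as the paper's: the forward direction uses constant functions and evaluation at $x_0$ together with a product neighbourhood on which the Gelfand transform vanishes, and the converse builds exactly the paper's relative unit $g=h+\mathbf{b}-h\mathbf{b}$ (i.e. $h\cdot 1_E+b-hb$) from a Urysohn cutoff and the Ditkin element for $f(x_0)$ in $E$. The only cosmetic difference is that the paper takes $U=\{x:\|f(x)b-f(x)\|_E<\epsilon\}$ directly as the open neighbourhood, where you instead split off $f(x_0)$ and invoke continuity of $f$; both yield the same estimate.
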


\begin{thm}\label{T:BRU}\label{T:BRU-referee}
Let
$(X,E,B,\tilde B)$ be a natural admissible quadruple and
suppose $\tilde B$ is semisimple. Then $\tilde B$
has bounded relative units if and only if both
$E$ and $B$ have bounded relative units. 
\end{thm}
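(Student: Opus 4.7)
The plan is to exploit the identification $\beta : X \times M(E) \to M(\tilde B)$, which by naturalness is a continuous bijection between compact Hausdorff spaces and hence a homeomorphism; under it, $f \in \tilde B$ has Gelfand transform $\hat f(x, \psi) = \psi(f(x))$. Before attacking either direction I would record three automatic-continuity facts, each obtained from the closed graph theorem applied to a map whose composition with every character is manifestly continuous: the bilinear product $B \times E \to \tilde B$, $(b, e) \mapsto b \cdot e$, well-defined by axiom (5); the composition $f \mapsto \lambda \circ f$ from $\tilde B$ to $B$ for each $\lambda \in M(E)$, well-defined by axiom (6); and the point evaluations $e_x : \tilde B \to E$, which are continuous by definition of an $E$-valued function algebra.

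For the ``only if'' direction I would fix base points $x_0 \in X$ and $\psi_0 \in M(E)$. Given $\psi \in M(E)$ and a compact $K \subset M(E) \setminus \{\psi\}$, form the compact set $\beta(\{x_0\} \times K) \subset M(\tilde B) \setminus \{\beta(x_0, \psi)\}$, apply BRU in $\tilde B$ to produce $f \in J_{\beta(x_0,\psi)}(\tilde B)$ with $\hat f \equiv 1$ there and norm at most $m_{\beta(x_0,\psi)}$, and set $a := f(x_0)$. Membership $a \in J_\psi(E)$ follows because any rectangular neighbourhood of $(x_0, \psi)$ fits inside the $\beta$-preimage of a neighbourhood of $\beta(x_0,\psi)$ on which $\hat f$ vanishes; the bound $\|a\|_E \leq \|e_{x_0}\| \, m_{\beta(x_0,\psi)}$ is independent of $K$. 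The argument for $B$ is symmetric: starting with $x_0 \in X = M(B)$ and compact $K \subset X \setminus \{x_0\}$, I would pull back through $\beta(K \times \{\psi_0\})$ and set $b := \psi_0 \circ f \in B$, using the continuity of $f \mapsto \psi_0 \circ f$.

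The substance is in the ``if'' direction. Let $\phi_0 = \beta(x_0, \psi_0)$ and let $K \subset M(\tilde B) \setminus \{\phi_0\}$ be compact, with $K' := \beta^{-1}(K) \subset (X \times M(E)) \setminus \{(x_0, \psi_0)\}$. The key step is to reduce $K'$ to a cover by just two rectangles. The compact fibre $\{\psi \in M(E) : (x_0, \psi) \in K'\}$ avoids $\psi_0$, so I can pick an open $V \subset M(E)$ containing it with $\psi_0 \notin \overline V$; then $K' \setminus (X \times V)$ projects to a compact subset of $X \setminus \{x_0\}$, which can be enclosed in an open $U$ with $x_0 \notin \overline U$, and by construction $K' \subset (U \times M(E)) \cup (X \times V)$. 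Using BRU in $B$ and $E$, pick $b \in J_{x_0}(B)$ with $b \equiv 1$ on $\overline U$ and $\|b\|_B \leq m_B$, and $e \in J_{\psi_0}(E)$ with $\hat e \equiv 1$ on $\overline V$ and $\|e\|_E \leq m_E$, then set
\[
 f \;=\; b \cdot 1_E + 1_B \cdot e - b \cdot e \;=\; 1_{\tilde B} - (1_B - b)(1_E - e) \in \tilde B.
\]
A fibrewise check shows $\hat f \equiv 1$ on $\beta\bigl((U \times M(E)) \cup (X \times V)\bigr) \supset K$, and $\hat f \equiv 0$ on the $\beta$-image of a product neighbourhood of $(x_0, \psi_0)$, so $f \in J_{\phi_0}(\tilde B)$; continuity of the product map gives $\|f\|_{\tilde B} \leq M_0(m_B \|1_E\| + \|1_B\| m_E + m_B m_E)$, a quantity depending only on $x_0$ and $\psi_0$.

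The main obstacle I anticipate is precisely arranging a cover of $K'$ in a way that produces a norm bound independent of $K$. A naive approach via a finite cover by rectangles avoiding $(x_0, \psi_0)$ combined with a $1 - \prod_i(1 - f_i)$ combination would give bounds growing exponentially in the cover size, which is fatal for BRU. Once one recognises that the only genuine obstruction lives in the single fibre over $x_0$, and that this fibre sits compactly away from $\psi_0$, the reduction to a two-piece cover is forced, and the unital identity $1 - (1-b)(1-e)$ then does the rest; verifying the automatic continuity of the three auxiliary maps via closed-graph arguments is the only further soft input.
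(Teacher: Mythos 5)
Your proposal is correct and follows essentially the same route as the paper: the ``only if'' direction restricts a relative unit of $\tilde B$ to the slice $\{x_0\}\times M(E)$ (resp.\ composes with $\psi_0$), and the ``if'' direction uses the same two-rectangle cover $K'\subset (U\times M(E))\cup(X\times V)$ together with automatic continuity of the maps $E\to\tilde B$ and $B\to\tilde B$. Your element $1-(1_B-b)(1_E-e)$ is exactly the complement of the product element $x\cdot a$ the paper builds after passing to the equivalent ``vanishes on $K$, equals $1$ near $\phi$'' formulation of bounded relative units (its Lemma 3.1), so the two constructions coincide; your explicit derivation of the two-rectangle cover fills in a step the paper only asserts.
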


\begin{cor}\label{C:BRU}
 Let $X$ be a nonempty compact Hausdorff
space and $E$ be a commutative Banach algebra with unit.
Then $C(X,E)$ has bounded relative units
if and only if $E$ has bounded relative units
\end{cor}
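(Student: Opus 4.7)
The plan is to deduce this corollary directly from Theorem~\ref{T:BRU} applied to the quadruple $(X,E,C(X),C(X,E))$. This quadruple is admissible and natural by Hausner's lemma, as recorded in the preliminaries, so two hypotheses remain to be checked: semisimplicity of $\tilde B = C(X,E)$, and bounded relative units for $B = C(X)$.

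First I would verify that $C(X)$ always has bounded relative units, with the uniform constant $m_\phi = 1$. Given $\phi \in M(C(X)) \cong X$ and a compact $K \subset X \setminus \{\phi\}$, Urysohn's lemma supplies $a \in C(X)$ with $\|a\|_X = 1$, $a \equiv 1$ on $K$, and $a \equiv 0$ on some open neighbourhood of $\phi$; the last property forces $\phi \notin \supp(\hat a)$, so $a \in J_\phi(C(X))$. When $E$ is semisimple, Hausner's theorem ensures that $C(X,E)$ is also semisimple, and Theorem~\ref{T:BRU} applies to give: $C(X,E)$ has bounded relative units iff both $E$ and $C(X)$ do. The previous paragraph collapses this to the desired equivalence.

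For a possibly non-semisimple $E$, I would reduce to the semisimple case by passing to the quotients $E/\rad(E)$ and $C(X,E)/\rad(C(X,E))$: bounded relative units is a condition phrased entirely through $M(\cdot)$ and the Gelfand transform, both of which factor through the radical. The main, and quite mild, obstacle is precisely this reduction: one must check that the norm bounds $m_\phi$ transfer between $A$ and $A/\rad(A)$ in both directions, which uses only that quotient norms are dominated by original norms and that lifts exist with arbitrarily small norm excess. Beyond this bookkeeping, the corollary is an essentially immediate specialization of Theorem~\ref{T:BRU}.
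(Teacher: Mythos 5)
Your route is the paper's route: specialize Theorem \ref{T:BRU} to the quadruple $(X,E,C(X),C(X,E))$, observe that $C(X)$ always has bounded relative units (the paper leaves this implicit; your Urysohn argument with $m_\phi=1$ is exactly right), and handle non-semisimple $E$ by passing to radical quotients.

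The one step you undersell is the last one. The transfer of bounded relative units between $A$ and $A/\rad(A)$ is indeed the bookkeeping you describe, but after replacing $C(X,E)$ by $C(X,E)/\rad(C(X,E))$ you still have to recognize that quotient as an algebra to which the semisimple case applies: either identify it with $C(X,\hat E)$, where $\hat E=E/\rad(E)$ carries the quotient norm, or check that $(X,\hat E,C(X),\widehat{C(X,E)})$ is a natural admissible quadruple with semisimple top algebra. The first of these is what the paper does, and it is not pure norm bookkeeping: one needs $\rad(C(X,E))=C(X,\rad(E))$ (which uses naturality of the quadruple, i.e.\ that every character is of the form $\psi\circ e_x$, to compute the radical as an intersection of kernels), and, more substantively, that the induced map $\widehat{C(X,E)}\to C(X,\hat E)$ is \emph{surjective} --- every continuous $\hat E$-valued function lifts to a continuous $E$-valued function --- which is proved by a Hausner-style partition-of-unity approximation and a geometric-series correction, not by lifting values pointwise. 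Once that identification is supplied, your argument closes and agrees with the paper's.
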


\begin{cor}\label{C:S-Ditkin}
 Let $X$ be a nonempty compact Hausdorff
space and $E$ be a commutative Banach algebra with unit.
Then $C(X,E)$ is a strong Ditkin algebra if and only if $E$
is a strong Ditkin algebra.
\end{cor}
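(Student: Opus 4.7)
The plan is to prove both directions of Corollary \ref{C:S-Ditkin} directly rather than to extract it formally from Theorem \ref{T:Ditkin} and Corollary \ref{C:BRU}, because ``strong Ditkin'' demands a \emph{single} bounded approximate identity that simultaneously fulfils the approximation requirement against every $a \in I_\phi$. I would exploit Hausner's identification $M(C(X,E)) \cong X \times M(E)$ and the fact that $C(X)$ is itself a strong Ditkin algebra.

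For the forward direction, assume $C(X,E)$ is strong Ditkin. Fix $\psi \in M(E)$, pick any $x_0 \in X$, and set $\phi := \psi \circ e_{x_0} \in M(C(X,E))$. Take a bounded approximate identity $(U_n) \subset J_\phi(C(X,E))$ for $I_\phi(C(X,E))$ with $\|U_n\|_X \le m_\phi$, and put $u_n := U_n(x_0) \in E$. I would check: $\|u_n\| \le m_\phi$; that $u_n \in J_\psi(E)$, because $\hat U_n$ vanishes on an open product neighborhood $V_1 \times V_2$ of $(x_0,\psi)$, whence $\hat u_n(\tau) = \hat U_n(x_0,\tau)$ vanishes on $V_2$; and that, for $a \in I_\psi(E)$, the constant lift $\tilde a(x) \equiv a$ lies in $I_\phi(C(X,E))$, so
\[
\|a - a u_n\| = \|(\tilde a - \tilde a U_n)(x_0)\| \le \|\tilde a - \tilde a U_n\|_X \to 0.
\]

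For the reverse direction, assume $E$ is strong Ditkin and fix $\phi = \psi \circ e_{x_0}$ in $M(C(X,E))$. Choose a bounded approximate identity $(v_n) \subset J_\psi(E)$ for $I_\psi(E)$ with $\|v_n\| \le m_\psi$ and one $(w_n) \subset J_{x_0}(C(X))$ for $I_{x_0}(C(X))$ with $\|w_n\|_X \le m_{x_0}$. I would propose
\[
U_n(x) := w_n(x)\cdot 1_E + (1 - w_n(x))\,v_n,
\]
so that $\hat U_n(y,\tau) = w_n(y) + (1 - w_n(y))\hat v_n(\tau)$ vanishes on the product neighborhood of $(x_0,\psi)$ on which both $w_n$ and $\hat v_n$ vanish, placing $U_n$ in $J_\phi(C(X,E))$ with uniform bound $m_{x_0} + (1+m_{x_0})\,m_\psi$. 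For $A \in I_\phi(C(X,E))$ a direct pointwise calculation gives
\[
A - A U_n \;=\; A\,(1 - w_n)\,(1 - v_n),
\]
and the estimate proceeds by splitting $X$ at $V_\delta := \{x : \|A(x) - A(x_0)\| < \delta\}$: on $V_\delta$, decompose $A(x)(1-v_n) = A(x_0)(1-v_n) + (A(x)-A(x_0))(1-v_n)$ and exploit $A(x_0) \in I_\psi(E)$ together with $\|v_n\| \le m_\psi$; on the compact complement $X \setminus V_\delta$, a Urysohn $f \in I_{x_0}(C(X))$ with $f \equiv 1$ there turns $\|f - fw_n\|_X \to 0$ into $\sup_{X \setminus V_\delta} |1 - w_n| \to 0$, while $(1-v_n)$ is only a bounded multiplier.

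The main obstacle is precisely this simultaneous estimate in the reverse direction: a single sequence $U_n$ must behave like $v_n$ near $x_0$ and like $1_E$ away from it, and $\|A - A U_n\|_X$ must tend to zero uniformly in $x$. The Urysohn cut-off and the continuity of $A$ at $x_0$ are the devices that balance the two approximation scales, yielding
\[
\limsup_{n \to \infty} \|A - A U_n\|_X \;\le\; (1+m_{x_0})(1+m_\psi)\,\delta
\]
for each $\delta > 0$, whence $\|A - A U_n\|_X \to 0$ on letting $\delta \downarrow 0$.
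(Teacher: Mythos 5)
Your route is genuinely different from the paper's. The paper disposes of this corollary in two lines: it invokes the equivalence ``strong Ditkin $\iff$ Ditkin $+$ bounded relative units'' from Dales and then applies Theorem \ref{T:Ditkin} and Corollary \ref{C:BRU}, both already proved. You instead build the bounded approximate identities by hand. Your forward direction (evaluate $U_n$ at $x_0$) is correct and mirrors the evaluation arguments the paper uses elsewhere (e.g.\ in Proposition \ref{P:4Dit-only}). The reverse direction is the substantive part: the ansatz $U_n=w_n\cdot 1_E+(1-w_n)v_n$, the factorisation $A-AU_n=A\,(1-w_n)(1_E-v_n)$, and the two-region estimate (continuity of $A$ at $x_0$ on $V_\delta$; uniform convergence $w_n\to1$ on the compact complement via a Urysohn function in $I_{x_0}(C(X))$) all check out, and are essentially the device of the paper's Proposition \ref{P:Dit-if} upgraded from a single approximant to a whole bounded approximate identity. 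What your approach buys is explicitness (a concrete bound $m_\phi\le m_{x_0}\|1_E\|+(1+m_{x_0})\,m_\psi$) and independence from the nontrivial Dales characterisation; what it costs is length and one extra ingredient, discussed next.

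The one genuine gap is the unexamined assertion that $C(X)$ is strong Ditkin, i.e.\ that $I_{x_0}(C(X))$ admits a bounded approximate identity $(w_n)$ contained in $J_{x_0}(C(X))$. Under the paper's literal definition (a \emph{sequence} indexed by $n\in\N$) this is false for general compact Hausdorff $X$: take $X=\beta\N$ and $x_0\in\beta\N\setminus\N$. Such a point is not a $G_\delta$, so if each $w_n$ vanishes on an open neighbourhood $U_n$ of $x_0$, then $\bigcap_n U_n$ contains some $y\neq x_0$, and a Urysohn function $f$ with $f(x_0)=0$, $f(y)=1$ lies in $I_{x_0}$ yet satisfies $\|f-fw_n\|_X\ge |f(y)(1-w_n(y))|=1$ for every $n$. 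The repair is to read ``bounded approximate identity'' as a net --- the convention under which the Dales equivalence cited by the paper is proved, and under which $C(X)$ \emph{is} strong Ditkin via the obvious net indexed by pairs (finite subset of $I_{x_0}$, $\varepsilon$). Your construction survives that change verbatim provided you index the $U$'s by the product of the two directed sets; you should either make this move explicitly, prove the $C(X)$ ingredient, or restrict to metrizable $X$. (The net-versus-sequence ambiguity is latent in the paper's own definition too, but your proof is the one that actually manufactures the sequence and so is the one that trips over it.)
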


The \lq\lq only if" direction of
Theorem \ref{T:Ditkin} and hence of
Corollary \ref{C:S-Ditkin}
extends to natural admissible quadruples:
(see 
Propositions  
\ref{P:4Dit-only} 
and Corollary \ref{C:4SDit-only}),
but it appears to be unknown whether 
the \lq\lq if" direction does.

The results about quadruples apply to
some so-called
Tomiyama products, defined below.  
See 
Corollaries 
\ref{C:4Dit-Tom},
\ref{C:BRU-Tomiyama}
and \ref{C:TSD}

We conclude the paper with an application to automatic continuity
for maps $T:C(X,E)\to C(Y,F)$. See Section \ref{S:application}.

\subsection{Properties of Admissible Quadruples}
If $(X,E,B,\tilde B)$ is a natural admissible quadruple,
then it is easy to see that
$\tilde B$ is semisimple if and only if $E$ is semisimple.

Although $E$ is not assumed semisimple in the definition,
the quadruple concept really concerns semisimple $E$.  The following
is rather easily checked: 
\begin{prop}\label{P:4hat}
 Let $(X,E,B,\tilde B)$
satisfy conditions (1)-(5) of the
definition.  
Define 
$$ \hat{\tilde B}:= \{ x\mapsto \widehat{f(x)}: f\in \tilde B\}.$$
Then
$(X,E,B,\tilde B)$ is an admissible quadruple,
if and only if $(X,\hat E,B,\hat{\tilde B})$
is an admissible quadruple. \qed
\end{prop}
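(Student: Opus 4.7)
My approach is to identify $M(\hat E)$ canonically with $M(E)$, equip $\hat E$ with the quotient norm of $E/\rad(E)$ and $\hat{\tilde B}$ with the quotient norm coming from the surjection $f\mapsto \hat f$, and then translate the six defining conditions one at a time. The Gelfand transform $\gamma_E\colon E\to \hat E$ is a continuous unital homomorphism with kernel $\rad(E)$, and each $\phi\in M(E)$ factors uniquely as $\phi=\lambda_\phi\circ \gamma_E$ for some $\lambda_\phi\in M(\hat E)$, so the correspondence $\phi\leftrightarrow \lambda_\phi$ is a bijection. The kernel of $f\mapsto \hat f$ on $\tilde B$ is the intersection $\bigcap_{x\in X} e_x^{-1}(\rad(E))$, which is closed since each $e_x$ is continuous and $\rad(E)$ is closed; hence $\hat{\tilde B}$ is a Banach algebra under the quotient norm.

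The central observation I would record first is the pointwise identity
$$ \lambda_\phi\circ \hat f \;=\; \phi\circ f \qquad (f\in \tilde B,\ \phi\in M(E)), $$
which is immediate from $\lambda_\phi(\widehat{f(x)})=\phi(f(x))$. Since every element of $M(\hat E)$ is of the form $\lambda_\phi$, this identity collapses condition~(\ref{4p6}) for $(X,\hat E,B,\hat{\tilde B})$ into exactly condition~(\ref{4p6}) for $(X,E,B,\tilde B)$, handling both directions of the desired equivalence at once.

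Next I would check that conditions~(\ref{4p1})--(\ref{4p5}) transfer across the two quadruples. Conditions~(\ref{4p1}) and~(\ref{4p3}) involve only $X$ and $B$, which are common to both. Condition~(\ref{4p2}) is standard: $\hat E\cong E/\rad(E)$ is a commutative Banach algebra with unit $\widehat{1_E}$. Condition~(\ref{4p5}) for the hat quadruple follows from the identity $\widehat{g\cdot a}=g\cdot \hat a$ together with~(\ref{4p5}) for the original. Continuity of the evaluation maps $\hat e_x\colon \hat{\tilde B}\to \hat E$ is inherited from $\tilde B$, since passing to infima over representatives in the inequality $\|\widehat{f(x)}\|\le \|f(x)\|\le \|e_x\|\cdot \|f\|_{\tilde B}$ gives $\|\hat f(x)\|\le \|e_x\|\cdot\|\hat f\|_{\hat{\tilde B}}$.

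The one step I expect to be a genuine obstacle is point-separation for $\hat{\tilde B}$ in condition~(\ref{4p4}). Separation for $\tilde B$ alone is insufficient, since $f(a)\ne f(b)$ in $E$ need not imply $\widehat{f(a)}\ne \widehat{f(b)}$ in $\hat E$ when $E$ is not semisimple. My plan is to circumvent this using~(\ref{4p5}) and the fact that $B$ already separates points of $X$: given distinct $a,b\in X$, choose $g\in B$ with $g(a)\ne g(b)$, set $f:=g\cdot 1_E\in \tilde B$, and note that $\hat f(x)=g(x)\cdot 1_{\hat E}$. Since $\phi(1_E)=1$ for every $\phi\in M(E)$, the element $1_{\hat E}$ is nonzero in $\hat E$, so $\hat f$ separates $a$ and $b$. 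The argument invokes only hypotheses~(\ref{4p1})--(\ref{4p5}) of the original quadruple, so the equivalence holds as stated.
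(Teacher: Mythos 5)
Your proof is correct. The paper offers no argument at all for this proposition (it is stated with ``rather easily checked'' and a \qed), so there is nothing to compare against; your verification via the canonical identification $M(\hat E)\cong M(E)$ and the identity $\lambda_\phi\circ\hat f=\phi\circ f$ is the natural one, and you rightly isolate the only non-automatic step, namely that point-separation for $\hat{\tilde B}$ must be recovered from condition (5) and the separation property of $B$ rather than from that of $\tilde B$ when $E$ is not semisimple.
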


(We emphasize that, in this proposition, $\hat E$
denotes the Gelfand transform algebra with
the quotient norm from $E/\rad(E)$, not
the supremum norm.)

Also, for semisimple $E$, there is 
sometimes symmetry in the r\^oles of $E$ and $B$:

\begin{definition}
We say that an admissible quadruple $(X,E,B,\tilde B)$ is 
{\em tight} if for each $f\in \tilde B$ the map
\begin{equation}\label{E:Phi}
\Phi(f):
\left\{
\begin{array}{rcl}
M(E) &\to& B \\
\lambda &\mapsto& \lambda\circ f
\end{array}
\right.
\end{equation}
is continuous from $M(E)$ (with the usual 
relative weak-star topology
from $E^*$) to $B$.
\end{definition}

\begin{prop}\label{P:4switch}
Suppose $(X,E,B,\tilde B)$ is a tight admissible
quadruple, and $E$ is semisimple.  Define
$\Phi(f)$ by Equation \eqref{E:Phi}, for each $f\in\tilde B$.
Then $\Phi$ is an algebra isomorphism
of $\tilde B$ onto a $B$-valued function
algebra on $M(E)$, and
$(M(E),B, E,\Phi(\tilde B))$
is an admissible quadruple.
\end{prop}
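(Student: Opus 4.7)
The plan is to verify all the requirements directly, with tightness furnishing the key continuity. I would first show that $\Phi$ is a unital algebra homomorphism: this is immediate from $\lambda\in M(E)$ being both linear and multiplicative, so $\Phi(fg)(\lambda)=\lambda\circ(fg)=(\lambda\circ f)(\lambda\circ g)$. Injectivity of $\Phi$ uses semisimplicity of $E$: if $\Phi(f)=0$ then $\lambda(f(x))=0$ for every $\lambda$ and $x$, so each $f(x)$ lies in $\rad(E)=\{0\}$. Transporting the $\tilde B$-norm along $\Phi$ makes the image $\Phi(\tilde B)$ a Banach algebra on which $\Phi$ is an isometric algebra isomorphism, and tightness places it inside $C(M(E),B)$.

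Next I would check that $\Phi(\tilde B)$ is a $B$-valued function algebra on $M(E)$. The constant $\lambda\mapsto 1_B$ is $\Phi(1_{\tilde B})$; for separation of points, given $\lambda_1\neq\lambda_2$ in $M(E)$, pick $e\in E$ with $\lambda_1(e)\neq\lambda_2(e)$ and apply $\Phi$ to the constant map $x\mapsto e$, which lies in $\tilde B$ since $1\in B$ and $B\cdot E\subset\tilde B$. For continuity of the evaluations $g\mapsto g(\lambda)$ on $\Phi(\tilde B)$, I would run a closed graph argument: if $f_n\to f$ in $\tilde B$ and $\lambda\circ f_n\to g$ in $B$, then pointwise evaluation $e_x$ is continuous on both $\tilde B$ and $B$, forcing $g(x)=\lambda(f(x))$ for every $x$, whence $g=\lambda\circ f$.

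Finally, I would verify the six conditions for $(M(E),B,E,\Phi(\tilde B))$ to be admissible. Compactness of $M(E)$ is Banach--Alaoglu (since $E$ has a unit), and $B$ is by hypothesis a commutative unital Banach algebra. Conditions (5) and (6) reduce to direct computations: for $b\in B$ and $e\in E$, one has $\Phi(be)(\lambda)=\lambda(e)\cdot b=\hat e(\lambda)\cdot b$, so $\hat e\cdot b\in\Phi(\tilde B)$; and for a character $\mu\in M(B)$, which equals $e_x$ for some $x\in X$ by naturality of $B$, we get $(\mu\circ\Phi(f))(\lambda)=\lambda(f(x))=\widehat{f(x)}(\lambda)$, so $\mu\circ\Phi(f)=\widehat{f(x)}\in\hat E$, identified with $E$ by semisimplicity.

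The step I expect to require the most care is condition (3): showing $E\subset C(M(E))$ is a \emph{natural} $\C$-valued function algebra on $M(E)$. Containment of constants, point separation, and continuity of evaluations $\hat g\mapsto\lambda(g)$ are straightforward once $E$ is identified with $\hat E$ via the Gelfand transform (using semisimplicity, and $\|\lambda\|=1$). Naturality requires that the character space of $\hat E$, with the norm transported from $E$, coincide with $M(E)$. The open mapping theorem applied to the Gelfand homomorphism $E\to\hat E$, which is a continuous algebra bijection when $E$ is semisimple, makes it a topological isomorphism of Banach algebras, so the character spaces match and $\hat E$ is natural on $M(E)$.
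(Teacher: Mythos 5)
Your proposal is correct and follows essentially the same route as the paper: tightness plus a closed-graph argument supplies the continuity needed to make $\Phi(\tilde B)$ a $B$-valued function algebra on $M(E)$ (the paper applies the Closed Graph theorem to $\Phi:\tilde B\to C(M(E),B)$ itself, you apply it evaluation by evaluation), and the remaining admissibility conditions are the routine verifications the paper dismisses with ``the rest is clear.'' One cosmetic remark: the open mapping theorem is not needed for naturality of $\hat E$, since once $\hat E$ carries the norm transported from $E$ the Gelfand map is an isometric isomorphism by construction.
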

\begin{proof} Since the quadruple is tight,
the map $\Phi$ 
is a well-defined linear map from the Banach
space $\tilde B$ to the Banach space
$C(M(E),B)$.
An application of the Closed
Graph theorem \cite{Conway} shows that 
$\Phi$
is continuous.  Thus $\Phi(\tilde B)$
is a $B$-valued function algebra on $M(E)$.
The rest is clear.
\end{proof} 

The following example shows that Proposition
\ref{P:4switch} would fail without the assumption
of tightness:

\begin{example}\label{E:not-tight}
Let $C$ denote the set of all continuous functions $f:[0,1]\times[0,2]\to\C$
such that the partial derivative $\displaystyle\frac{\partial f}{\partial y}$
exists at all points of the rectangle $R:= [0,1]\times[0,2]$, is bounded
on the whole rectangle, and is such that $\displaystyle\frac{\partial f}{\partial y}(x,y)$
is continuous on each vertical line, i.e. is 
continuous in $y$ on $[0,2]$ for each fixed $x\in[0,1]$.  With pointwise
operations and the norm
$$ \|f\|_C:= \sup_R|f| + \sup_R\left|\frac{\partial f}{\partial y}\right|,$$
 $C$ is a natural function algebra on $R$.  

Next, take $E= C^0([0,1])$, 
$B=C^1([0,2])$, and $X=[0,1]$.  Then (with pointwise operations
and the usual norms)
$E$ and $B$ are semisimple separable commutative Banach algebras,
with $M(E)=[0,1]$ and $M(B)=[0,2]$.
 
Let
$$ C_1:= \left\{ F\in E^{[0,2]}: 
((x,y)\mapsto F(y)(x)) \in C
\right\}, $$ 
$$ C_2:= \left\{ G\in B^{[0,1]}: 
((x,y)\mapsto G(x)(y)) \in C
\right\}. $$ 
Then $C_1$ is an algebra of $E$-valued functions on $[0,2]$
and $C_2$ is an algebra of $B$-valued functions on $[0,1]$,
when endowed with pointwise operations. Both algebras are
algebra-isomorphic to $C$, via obvious isomorphisms. When they
are given the 
norms induced by these isomorphisms, $(X,E,B,C_1)$ is an
admissible quadruple, and, in the notation of Proposition \ref{P:4switch},
$C_2=\Phi(C_1)$.  

We claim that $(M(E),B,E,\Phi(C_1))=([0,1],B,E,C_2)$ is
not an admissible quadruple, because the elements of
$C_2$ are not all {\em continuous } $B$-valued functions.
To see this, we give an example of a function $f\in C$
such that 
$$\left\{
\begin{array}{rcl}
[0,1] &\to& C^1([0,2]),\\
x&\mapsto& (y\mapsto f(x,y)),\\
\end{array}
\right. 
$$
is not continuous.

Take
$$ f(x,y) = \left\{
\begin{array}{rcl}
0 &,& 0\le y\le x,\\
\displaystyle\frac{(y-x)^2}{2x} &,& x<y <2x,\\
\displaystyle y - \frac{3x}2 &,& 2x\le y\le 2,
\end{array}
\right\},
0\le x\le 1.
$$
Then $f$ is continuous on $R$, the partial derivative
$\displaystyle\frac{\partial f}{\partial y}$ is continuous on each vertical
line and is bounded, but it is not continuous at $(0,0)$.
Moreover, the value of $\|f(x,\cdot)-f(0,\cdot)\|_B$ exceeds
$1$ for all $x>0$, so it does not tend to $0$ as $x\downarrow0$.
\end{example}

\subsection{Tensor products}\label{SS:Tomiyama}
Let $A$ and $B$ be commutative
Banach algebras with unit. A {\em Tomiyama product
of $A$ and $B$ }is the completion of the
algebraic tensor product $A\otimes B$
with respect to some submultiplicative
cross norm not less than the injective tensor
product norm. See \cite{Ryan} and \cite[Section 2.11]{Kaniuth}
for background on cross norms and tensor products
of Banach algebras.    

\begin{prop}\label{P:4Tom}
Let $C$ be a Tomiyama product of $A$ and $B$, two 
commutative Banach algebras with unit. Suppose $C$ is semisimple.
Then $(M(B),A,B,C)$ is a natural
admissible quadruple.
\end{prop}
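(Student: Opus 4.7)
The plan is to identify $C$ with a subalgebra of $C(M(B),A)$ via evaluation maps, verify the six admissibility axioms, and then check that $\beta$ is bijective. The single technical tool used throughout is the classical isometric identification $A\otimes_\varepsilon C(Y)\cong C(Y,A)$ together with the functoriality of the injective tensor norm: a bounded linear functional $\ell$ on one tensor factor induces a bounded operator $\mathrm{id}\otimes\ell$ on the injective tensor product of norm $\|\ell\|$.

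I would first define, for each $x\in M(B)$, an evaluation homomorphism $e_x:C\to A$ as the continuous extension of the assignment $a\otimes b\mapsto \hat b(x)\,a$ on the algebraic tensor product $A\otimes B$. The composition $A\otimes_\varepsilon B\to A\otimes_\varepsilon C(M(B))\cong C(M(B),A)$ induced by the Gelfand map $B\to C(M(B))$ is norm-decreasing, and the Tomiyama cross norm on $C$ dominates the injective norm, so this composition extends to a continuous algebra homomorphism $\iota:C\to C(M(B),A)$ satisfying $\iota(u)(x)=e_x(u)$.

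Next I would classify $M(C)$. Any $\chi\in M(C)$ restricts to a continuous character of the dense subalgebra $A\otimes B$; setting $\psi(a):=\chi(a\otimes 1_B)$ and $\phi(b):=\chi(1_A\otimes b)$, these are characters of $A$ and $B$, and multiplicativity forces $\chi(a\otimes b)=\psi(a)\phi(b)$. Conversely, every pairing $\psi\otimes\phi$ has dual injective-tensor norm equal to $1$ and so extends to an element of $M(C)$. Thus $\beta(x,\psi)=\psi\otimes\delta_x$ is a bijection onto $M(C)$. Two useful corollaries follow: $B$ is semisimple (since $b\in\rad(B)$ would force $1_A\otimes b$ to be annihilated by every character of $C$, hence $1_A\otimes b=0$ by semisimplicity of $C$), and $\iota$ is injective (from $\iota(u)=0$ one deduces $\chi(u)=0$ for every $\chi\in M(C)$).

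The six axioms are now routine. Conditions (1) and (2) are hypotheses; (3) is the semisimplicity of $B$ just obtained; (4) is built into $\iota$, with $\iota(1_A\otimes 1_B)$ giving the constant function $1_A$ and $\iota(1_A\otimes b)$ separating any two points of $M(B)$ distinguished by $\hat b$; (5) is immediate since $a\cdot b$ viewed as an $A$-valued function is $\iota(a\otimes b)$; and for (6), the functional $\lambda\in M(A)$ induces a bounded slice map $\lambda\otimes\mathrm{id}_B:C\to B$ by the same injective-tensor argument, whose image at $u$ equals $\lambda\circ\iota(u)$. The main obstacle is the first step---organizing the injective-norm bound so that the evaluations $e_x$ and all slice maps are uniformly controlled by the Tomiyama cross norm; once this is secured the remainder is bookkeeping, with the character classification supplying both the bijectivity of $\beta$ and the injectivity of $\iota$.
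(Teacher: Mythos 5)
Your proposal is correct and follows essentially the same route as the paper: both embed $C$ into $C(M(B),A)$ via the domination of the injective tensor norm by the Tomiyama cross norm together with Grothendieck's identification $A\check\otimes C(M(B))\cong C(M(B),A)$, both use semisimplicity of $C$ plus the product structure of characters to get injectivity of the embedding, and both obtain naturality from $M(C)\cong M(A)\times M(B)$. The only difference is presentational: you re-derive Tomiyama's character classification and the semisimplicity of $B$ inline (and give a slightly more careful slice-map justification of axiom (6)), where the paper cites Tomiyama and Kaniuth.
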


\begin{rem} Kaniuth shows (cf. \cite[Cor. 2.11.3]{Kaniuth}
that if $C$ is semisimple, then so are $A$ and $B$.
Thus, since we are mainly interested in
semisimple $C$, we might just as well have restricted
to semisimple $A$ and $B$ in the definition of
Tomiyama product. 
 
Tomiyama showed \cite[Theorem 4]{Tom} 
that a Tomiyama product $C$ is automatically semisimple
if both $A$ and $B$ are semisimple, at least 
one of them has the Banach approximation
property and the norm is either the projective
or injective product norm. 
\end{rem}

\begin{proof}[Proof of Proposition]
Let $X=M(B)$. 

First, we have to explain how
$C$ may be regarded as an $A$-valued function
algebra on $X$ (condition (4) of the definition
of admissible quadruple).  

By the definition of Tomiyama product, we have 
$$ \|f\|_{A\check\otimes B} \le \|f\|_C $$
for all $f\in A\otimes B$.

Let $f\in C$. Then there is a $C$-norm-Cauchy
sequence $f_n\in A\otimes B$ with
$\|f-f_n\|_C\to 0$.  Thus
$(f_n)$ is $A\check\otimes B$-norm-Cauchy as well,
and so converges to an element $\Psi(f)\in A\check\otimes B$.
We have 
$$ \|\Psi(f)\|_{A\check\otimes B} =\lim
\|\Psi(f_n)\|_{A\check\otimes B} \le\lim
\|f_n\|_C =
\|f\|_C.$$

One can check that $\Psi(f)$ does not depend on which Cauchy sequence
$(f_n)$ is chosen.  So we have a well-defined continuous
map $\Psi:C\to A\check\otimes B$, a contraction,
in fact.  The map
$\Psi$ is also linear, as is easily seen.

Next, we claim that $\Psi$ is injective. 
Suppose $f\in C$ and $\Psi(f)=0$.  Take any sequence
$f_n\in A\otimes B$ such that $f_n\to f$ in $C$-norm.
Then $f_n\to 0$ in $A\check\otimes B$-norm.  

Fix any $\chi\in M(C)$. By Tomiyama's Theorem,
there exist $\lambda\in M(A)$ and $\gamma\in M(B)$
such that $\chi=\lambda\otimes\gamma$ when restricted to
the algebraic tensor product $A\otimes B\subset C$.
Moreover, there is a character $\chi'$ on 
$A\check\otimes B$ that agrees with $\chi$
on $A\otimes B$.

Fix $\epsilon>0$.  Choose $n\in\N$ such that
$$ \| f- f_n \|_C < \frac{\epsilon}2
\hbox{ and } \| f_n\|_{A\check\otimes B} < \frac\epsilon2.$$
Then
$$ |\chi(f)| \le |\chi(f-f_n)| + | \chi(f_n)| < 
\frac\epsilon2
+\frac\epsilon2,
$$
because $\chi$ has norm $1$ in $C^*$,
$\chi(f_n)=\chi'(f_n)$,  and $\chi'$ has norm $1$
in   $(A\otimes B)^*$.
Thus $|\chi(f)|<\epsilon$ for all $\epsilon>0$.

Thus $\chi(f)=0$ for all $\chi\in M(C)$.
Since $C$ is semisimple, $f=0$.
Thus $\Psi$ is injective, as claimed.

So we have a continuous
injection from $C$ into the injective
tensor product $A\check\otimes B$, which
is a subset of $A\check\otimes C(X)$,
and the latter is naturally identified with
$C(X,A)$, as shown by Grothendieck \cite{Ryan}.

Conditions (\ref{4p1})-(\ref{4p3}) and (\ref{4p5}) are straightforward,
and condition (\ref{4p6}) holds because
$A\otimes B$ is dense in $C$.

Thus $(X,A,B,C)$ is an admissible quadruple.
It is natural by Tomiy\-ama's main result
that $M(C)$ is homeomorphic to $M(A)\times M(B)$
\cite[Theorem 1]{Tom}.
\end{proof}

\begin{rem} The projective tensor product $A\hat\otimes B$ 
of commutative Banach algebras $A$ and $B$ is an example
of a Tomiyama product, but it is not always semisimple
if $A$ and $B$ are.
The natural map $\Psi: A\hat\otimes B\to A\check\otimes B$
(as in the proof above) may fail to be injective.  In fact
\cite[Section 9]{Bierstedt} it is injective for all
$B$ if and only if $A$ has the Banach approximation property.
\end{rem}

\begin{cor}
Let $C$ be a Tomiyama product of $A$ and $B$, two semisimple 
commutative Banach algebras with unit. Suppose $C$ is semisimple.
Then the admissible quadruple $(M(B),A,B,C)$ is tight.
\end{cor}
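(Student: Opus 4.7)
The strategy is to establish tightness first for elements of the algebraic tensor product $A\otimes B$, where the formula for $\Phi$ is explicit, and then to extend by density, using the injective tensor norm to dominate $\|\Phi(g)\|_\infty$.

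For $g=\sum_{i=1}^n a_i\otimes b_i\in A\otimes B$, the map $\Phi(g):M(A)\to B$ is given by $\lambda\mapsto\sum_i\lambda(a_i)b_i$, which is weak-$*$-to-norm continuous because each scalar coefficient $\lambda\mapsto\lambda(a_i)$ is weak-$*$ continuous. I would then establish the estimate $\|\Phi(g)\|_\infty\le\|g\|_{A\check\otimes B}\le\|g\|_C$. Viewing $g$ as the operator $A^*\to B$ sending $\phi\mapsto(\phi\otimes\operatorname{id})(g)$, its operator norm equals the injective cross norm $\|g\|_\epsilon$; since characters on $A$ have norm at most $1$, the supremum over $\lambda\in M(A)$ is bounded by this operator norm, and the cross-norm property of a Tomiyama product yields $\|g\|_\epsilon\le\|g\|_C$.

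Because $A\otimes B$ is dense in $C$ (definition of Tomiyama product) and the target space $C(M(A),B)$ is complete under the uniform norm, $\Phi$ extends to a continuous linear map $\bar\Phi:C\to C(M(A),B)$. The main obstacle is then to identify this abstract extension with the pointwise formula $f\mapsto(\lambda\mapsto\lambda\circ f)$ appearing in the definition of tightness. Given $f\in C$ and a sequence $f_n\in A\otimes B$ with $f_n\to f$ in $C$-norm, one has $\bar\Phi(f)(\lambda)=\lim_n\lambda\circ f_n$ in $B$-norm for each fixed $\lambda$, hence in the Gelfand supremum norm on $M(B)$. On the other hand, the proof of Proposition \ref{P:4Tom} furnishes a continuous embedding $C\hookrightarrow C(M(B),A)$, so $f_n\to f$ uniformly on $M(B)$ as $A$-valued functions, and consequently $\lambda\circ f_n\to\lambda\circ f$ uniformly on $M(B)$ as scalar functions. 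Thus $\bar\Phi(f)(\lambda)$ and $\lambda\circ f$ have identical Gelfand transforms on $M(B)$, and semisimplicity of $B$ forces them to coincide in $B$. This gives $\Phi(f)=\bar\Phi(f)\in C(M(A),B)$, which is precisely tightness.
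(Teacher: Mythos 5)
Your proof is correct, but it takes a more hands-on route than the paper. The paper's own proof is essentially two lines: since the definition of a Tomiyama product is symmetric in the two factors, one simply applies Proposition \ref{P:4Tom} with $A$ and $B$ interchanged to conclude that $(M(A),B,A,C)$ is an admissible quadruple, so that $C$ is realized as a $B$-valued function algebra on $M(A)$, which is exactly tightness of $(M(B),A,B,C)$. What you have done is, in effect, to unpack that swapped application of Proposition \ref{P:4Tom} by hand: your estimate $\|\Phi(g)\|_\infty\le\|g\|_{B\check\otimes A}\le\|g\|_C$ on $A\otimes B$ and the density extension are precisely the construction of the contraction $C\to B\check\otimes A\subset C(M(A),B)$ carried out in that proof. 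The one place where your version genuinely adds something is the final identification step: you verify explicitly, using the other embedding $C\hookrightarrow C(M(B),A)$ together with semisimplicity of $B$, that the abstract extension $\bar\Phi(f)$ agrees pointwise with $\lambda\mapsto\lambda\circ f$, which is the map the definition of tightness actually refers to. The paper leaves this identification implicit (both here and in the proof of Proposition \ref{P:4Tom}, where the corresponding claim that the quadruple conditions hold is dismissed as straightforward), so your argument is a legitimate, slightly more detailed substitute; its only cost is that it obscures the symmetry that makes the paper's proof so short.
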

\begin{proof}
Applying the Theorem with $A$ and $B$ interchanged, we conclude that
$(M(A),B,A,C)$ is an admissible quadruple, so $C$ is
(isometrically isomorphic to) a $B$-valued function algebra
on $M(A)$, i.e. $(M(B),A,B,C)$ is tight.
\end{proof}

Thus we can assert that the algebra $C$ in Example \ref{E:not-tight}
is {\em not} a Tomiyama product of $C^0([0.1])$ and $C^1([0,2])$.

\section{Ditkin algebras}
In this section we will prove Theorem \ref{T:Ditkin}.
As indicated, one direction generalises to
admissible quadruples:

\subsection{The ``only if'' direction}
\begin{prop}\label{P:4Dit-only}
 Let $(X,E,B,\tilde B)$
be an admissible quadruple.
Suppose $\tilde B$ is Ditkin.
Then $E$ and $B$ 
are Ditkin.
\end{prop}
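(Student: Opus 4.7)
The plan is to pull back Ditkin's condition from $\tilde B$ to each of $E$ and $B$ separately, using the associated map $\beta$, the evaluation maps $e_x:\tilde B\to E$, and the structural conditions (5) and (6) of the admissible quadruple.

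For $E$, I would proceed as follows. Given $\lambda\in M(E)$ and $a\in I_\lambda(E)$, pick any $x_0\in X$ and set $\chi:=\lambda\circ e_{x_0}=\beta(x_0,\lambda)\in M(\tilde B)$. The constant $E$-valued function $\tilde a\equiv a$ lies in $\tilde B$ by condition (5), and $\chi(\tilde a)=\lambda(a)=0$, so $\tilde a\in I_\chi(\tilde B)$. Ditkin's condition at $\chi$ then yields $u_n\in J_\chi(\tilde B)$ with $\tilde a u_n\to\tilde a$ in $\tilde B$. Setting $v_n:=u_n(x_0)\in E$, continuity of $e_{x_0}$ gives $av_n\to a$ in $E$. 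To verify that $v_n\in J_\lambda(E)$, I would observe that the map $\sigma:M(E)\to M(\tilde B)$, $\mu\mapsto\mu\circ e_{x_0}$, is weak-star continuous; if $\hat u_n$ vanishes on an open neighbourhood $V$ of $\chi$, then $\hat v_n=\hat u_n\circ\sigma$ vanishes on the open neighbourhood $\sigma^{-1}(V)$ of $\lambda$.

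For $B$, naturality identifies $M(B)$ with $X$, so each $\phi\in M(B)$ corresponds to some $x_0\in X$. Given $g\in I_\phi(B)$, form the $E$-valued function $\tilde g:x\mapsto g(x)\cdot 1_E$, which lies in $\tilde B$ by condition (5); fix any $\lambda_0\in M(E)$ and set $\chi:=\lambda_0\circ e_{x_0}$, so that $\chi(\tilde g)=g(x_0)=0$ and $\tilde g\in I_\chi(\tilde B)$. Applying Ditkin at $\chi$ produces $w_n\in J_\chi(\tilde B)$ with $\tilde g w_n\to\tilde g$. Define $h_n:=\lambda_0\circ w_n\in B$ (using condition (6)). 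Continuity of $\tau:X\to M(\tilde B)$, $x\mapsto\lambda_0\circ e_x$, together with the identity $h_n=\hat w_n\circ\tau$, shows that $h_n$ vanishes on a neighbourhood of $x_0$, so $h_n\in J_\phi(B)$.

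The only step that is not a routine continuity check is the convergence $gh_n\to g$ in the $B$-norm; this I expect to be the main obstacle, since the definition of admissible quadruple does not directly give control of the $B$-norm in terms of the $\tilde B$-norm. My plan is to apply the Closed Graph Theorem to the linear map $T:\tilde B\to B$, $f\mapsto\lambda_0\circ f$; closedness of its graph follows from continuity of the evaluations on both $\tilde B$ and $B$ and continuity of $\lambda_0$ on $E$. Since $T(\tilde g w_n)=gh_n$ and $T(\tilde g)=g$, continuity of $T$ then delivers $gh_n\to g$ in $B$, completing the argument.
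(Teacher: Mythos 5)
Your proposal is correct and follows essentially the same route as the paper's proof: both parts pull back Ditkin's condition via the constant/tensor elements supplied by condition (5), verify membership of $J_\lambda(E)$ and $J_\phi(B)$ by composing the vanishing of $\hat u_n$ near $\chi$ with the continuous slice maps into $M(\tilde B)$, and control the $B$-norm via the Closed Graph Theorem applied to $f\mapsto\lambda_0\circ f$, exactly as in the paper. The only (inessential) difference is that the paper first reduces to semisimple $E$ via its Proposition 1.1, a step your direct argument does not need.
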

\begin{proof}
Suppose $\tilde B$ is Ditkin.

By Proposition \ref{P:4hat}, $(X,\hat E, B, \hat{\tilde B})$
is admissible, and since $\hat{\tilde B}$ inherits the Ditkin property,
we may assume without loss in generality that $E$ is semisimple.

To see that $E$ is Ditkin, fix $\psi\in M(E)$,
and $b\in E$ with $\psi(b)=0$.  We wish to show that
there exist $b_n\in J_\psi(E)$ such that $\|b-b_nb\|_E\to 0$
as $n\uparrow\infty$.

Pick any $x_0\in X$, and define $\phi=\beta(x_0,\psi)$.
Then $\phi\in M(\tilde B)$. 
Define $f(x)=b$, for all $x\in X$.  Then
$f\in I_\phi$, so since $\tilde B$ is Ditkin
we may choose $f_n\in \tilde B$ such that
each $\hat f_n=0$ near $\phi$ in $M(\tilde B)$
and $\|f-f_nf\|_X\to0$ as $n\uparrow\infty$.
Take $b_n=f_n(x_0)$. Then 
$\|b-b_nb\|_E\to0$.
Since $\beta$ is
continuous, we may choose open sets $U_n\subset X$ and
$V_n\subset M(E)$ such that
$x_0\in U_n$, $\psi\in V_n$ and $\hat f_n=0$
on $\beta(U_n\times V_n)$. Then for $\chi\in V_n$ we have
$$ \hat b_n(\chi) = \hat f_n(\beta(x_0,\chi)) =0.$$
Thus $b_n\in J_{\psi}$, as desired.

If $(X,E, B, \tilde B)$ were tight, we could immediately
use Proposition \ref{P:4switch}, to deduce that
$(M(E),B,E,\Phi(\tilde B))$
is admissible, and the isomorphic algebra $\Phi(\tilde B)$
is Ditkin, so $B$ is Ditkin. However we do not need to make this assumption.

Assume just that 
$(X,E,B,\tilde B)$ is an admissible quadruple,
and $\tilde B$ is Ditkin. 
The map 
$$ \Psi(\lambda):\left\{
\begin{array}{rcl}
\tilde B &\to& B \\
f &\mapsto & \lambda\circ f
\end{array}
\right.
$$
is a well-defined algebra homomorphism, for each $\lambda\in M(E)$.  
By using the Closed Graph theorem,
we see that $\Psi(\lambda)$ is continuous.

Now fix $a\in X$ and $g\in B$ with $g(a)=0$.  Pick any $\lambda_0\in M(E)$
and define $\phi = \beta(a,\lambda_0)$.  Then $\phi\in M(\tilde B)$.
Define $f(x) = g(x) \cdot 1_E$ for all $x\in X$.  Then $f\in \tilde B$ and
$\phi(f)=0$, so since $\tilde B$ is Ditkin we may choose $f_n\in\tilde B$
such that $\hat f_n=0$ near $\phi$ in $M(\tilde B)$ and
$\| f - f_n f\|_{\tilde B}\to0$.  Let $g_n = \Psi(\lambda_0)(f_n)$.
Then $g_n\in B$ and $g_n=0$ near $a$. 
Since $g= \Psi(\lambda_0)(f)$ and $\Psi(\lambda_0)$ is continuous, we have
 $$ \| g - g_n g \|_B \to 0. $$
Thus $B$ is Ditkin.
\end{proof}

Applying Proposition \ref{P:4Tom}, we have:
\begin{cor}\label{C:4Dit-Tom}
Let $A$ and $B$ be semisimple commutative Banach algebras with unit,
and let $C$ be a semisimple Tomiyama product of $A$ and $B$.
Suppose $C$ is Ditkin. Then  
so are $A$ and $B$. \qed 
\end{cor}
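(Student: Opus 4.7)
The plan is to derive this corollary by combining Proposition \ref{P:4Tom} with Proposition \ref{P:4Dit-only}; essentially no fresh work is required beyond assembling the hypotheses correctly.

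First I would invoke Proposition \ref{P:4Tom} with the commutative unital Banach algebras $A$ and $B$, taking as compact Hausdorff space $X := M(B)$. The standing hypothesis that the Tomiyama product $C$ is semisimple is precisely what Proposition \ref{P:4Tom} requires, so we conclude that $(M(B),A,B,C)$ is a natural admissible quadruple. (The fact that $A$ and $B$ are themselves semisimple, which is part of our hypotheses here, is not even needed for this step, although it is automatic from the semisimplicity of $C$ by Kaniuth's observation mentioned in the Remark following Proposition \ref{P:4Tom}.)

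Second, I would feed this admissible quadruple directly into Proposition \ref{P:4Dit-only}. In the notation of that proposition, set $X = M(B)$, $E = A$, and $\tilde B = C$. By hypothesis $\tilde B = C$ is Ditkin, so Proposition \ref{P:4Dit-only} applies and yields that both $E = A$ and $B$ are Ditkin, which is exactly the conclusion sought.

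There is no real obstacle to overcome: the corollary is a pure plug-in, and the content lies entirely in Propositions \ref{P:4Tom} and \ref{P:4Dit-only}. The only thing worth flagging in the writeup is the correspondence between the symbols used in the general admissible-quadruple framework ($X,E,B,\tilde B$) and the symbols used here for the Tomiyama product ($M(B), A, B, C$), so that the reader can see without pause that the quadruple of Proposition \ref{P:4Tom} matches the hypothesis of Proposition \ref{P:4Dit-only}.
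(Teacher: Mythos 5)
Your proposal is correct and is exactly the argument the paper intends: the corollary is stated immediately after the sentence ``Applying Proposition \ref{P:4Tom}, we have,'' so the paper's proof is precisely the combination of Proposition \ref{P:4Tom} (yielding the admissible quadruple $(M(B),A,B,C)$) with Proposition \ref{P:4Dit-only}. Your bookkeeping of the symbol correspondence $X=M(B)$, $E=A$, $\tilde B=C$ is accurate and nothing further is needed.
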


\subsection{Converse direction}
Turning to the other direction, 
we restrict to the special
quadruple $(X,E,C(X),C(X,E))$:

\begin{prop}\label{P:Dit-if} Let $X$ be a compact Hausdorff space
and let $E$ be a commutative Banach algebra with unit.
Suppose $E$ is Ditkin.
Then $C(X,E)$ is Ditkin.
\end{prop}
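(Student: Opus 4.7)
The plan is to exploit Hausner's identification $M(C(X,E))\cong X\times M(E)$ via $\beta$, and to build the required Ditkin approximants pointwise at the base point $x_0$ (using the Ditkin property of $E$) then spread them over $X$ with a Urysohn cut-off. Concretely, fix $\phi\in M(C(X,E))$ and write $\phi=\beta(x_0,\psi_0)$; then $f\in I_\phi$ is equivalent to $f(x_0)\in I_{\psi_0}(E)$. Given $\epsilon>0$, I want to produce $g\in J_\phi(C(X,E))$ with $\|f-fg\|_X<\epsilon$.

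The first step is to invoke Ditkin for $E$ at $\psi_0$ to choose $b\in J_{\psi_0}(E)$ with $\|f(x_0)-f(x_0)b\|_E<\epsilon/3$; by definition, $\hat b$ vanishes on some open neighbourhood $V$ of $\psi_0$ in $M(E)$. Set $M:=\|b\|_E$. Next, use continuity of $f$ at $x_0$ to pick an open neighbourhood $U$ of $x_0$ in $X$ with $\|f(x)-f(x_0)\|_E<\epsilon/(3(1+M))$ for $x\in U$, and by Urysohn's lemma pick $\chi\in C(X)$ with $0\le\chi\le 1$, $\chi\equiv 1$ on some smaller open neighbourhood $W\subset U$ of $x_0$, and $\chi\equiv 0$ off $U$. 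Define
$$g(x):=\chi(x)\cdot b+(1-\chi(x))\cdot 1_E\in C(X,E).$$

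To check $g\in J_\phi$, observe that on $W\times V$ we have $\chi(x)=1$ and $\hat b(\psi)=0$, so $\hat g(x,\psi)=\hat b(\psi)=0$; since $W\times V$ is a neighbourhood of $(x_0,\psi_0)$ in $X\times M(E)$, $\phi=\beta(x_0,\psi_0)\notin\supp(\hat g)$. The error estimate follows from the identity $f(x)-f(x)g(x)=\chi(x)\bigl(f(x)-f(x)b\bigr)$, which is identically zero off $U$, while on $U$ the triangle inequality
$$\|f(x)-f(x)b\|_E\le \|f(x)-f(x_0)\|_E+\|f(x_0)-f(x_0)b\|_E+\|b\|_E\,\|f(x_0)-f(x)\|_E$$
yields a bound strictly less than $\epsilon$ by the choices above.

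The only substantive obstacle is that the Ditkin approximant $b$ produced from $E$ may have arbitrarily large norm, so the neighbourhood $U$ must be selected \emph{after} $b$, with its size calibrated to $M=\|b\|_E$; once the order of quantifiers is arranged this way, the estimate closes with room to spare. Everything else (continuity of $g$, the computation of $\hat g$, and the fact that $\phi$ ranges over all of $M(C(X,E))$ via $\beta$) is routine.
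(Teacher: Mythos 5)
Your proposal is correct and follows essentially the same route as the paper: identify $\phi=\beta(x_0,\psi_0)$ via Hausner's lemma, take a Ditkin approximant $b\in J_{\psi_0}(E)$ at the base point, and blend it with the identity using a Urysohn function supported near $x_0$ (your $g=\chi b+(1-\chi)1_E$ is exactly the paper's multiplier $h\cdot 1_E+b-hb$ with $\chi=1-h$). The only cosmetic difference is in the error estimate: the paper simply defines $U:=\{x:\|f(x)b-f(x)\|_E<\epsilon\}$, which is open by continuity and contains $x_0$, thereby avoiding your three-term triangle inequality and the calibration of $U$ to $\|b\|_E$.
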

\begin{proof}
Fix $\phi\in M(C(X,E))$,
and $f\in I_\phi$.
Let $\epsilon>0$ be given.  

Choose $\psi\in M(E)$
and $x_0\in X$ such that $\phi=\beta(x_0,\psi)$.

Let $a=f(x_0)$. Then $\psi(a)=0$.
Since $E$ is Ditkin, we may choose
$b\in J_\psi(E)$ such that
$\|a-ba\|_E<\epsilon$. 
Let
$$ U= \{x\in X: \|f(x)b - f(x)\|_E<\epsilon\}.$$
Then $U$ is  an open neighbourhood of $x_0$.
Thus, by Urysohn's Lemma, we may choose $h\in J_{x_0}(C(X))$
with $h=1$ off $U$ and $0\le h\le 1$ on $X$.

Then for each $x\in X$, we have
$$ \|(1-h(x))(bf(x)-f(x))\|_E < \epsilon.$$
Thus $\|(1-h)(bf-f)\|_X<\epsilon$.  Now
$$ f + (1-h)(bf-f) = f(h\cdot1_E+b-hb) \in f J_\phi$$
(since $h\cdot1_E+b-hb=0\ $ on $\beta^{-1}\left(h^{-1}(0)\cap b^{-1}(0)\right)$),
so the distance from $f$ to $fJ_\phi$
in $C(X,E)$ norm is less than $\epsilon$.

The result follows.
\end{proof}

\begin{proof}[Proof of Theorem \ref{T:Ditkin}]
Apply Proposition \ref{P:4Dit-only}
(with $B=C(X)$ and $\tilde B=C(X,E)$)
and Proposition \ref{P:Dit-if}.
\end{proof}

\vspace{1cm}
\section{Bounded relative units}
\subsection{Proof of Theorem \ref{T:BRU}}
Note the following:
\begin{lem}\label{L:referee}
If $A$ is a commutative Banach algebra with identity, then the
following are equivalent:
\\
(1) $A$ has bounded relative units.
\\
(2) For each $\phi\in M(A)$, there exists a constant $c_\phi>0$
with the following property: for every closed subset $K$ of $M(A)$
with $\phi\not\in K$, there exists $x\in A$ such that $\|x\|\le c_\phi$,
$\hat x=0$ on $K$ and $\hat x=1$
on some neighbourhood of $\phi$.
\end{lem}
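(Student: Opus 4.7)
The plan is to exploit the natural symmetry between the two formulations via the substitution $x \leftrightarrow 1_A - x$. Since $A$ has a unit, $\widehat{1_A}\equiv 1$ on $M(A)$, so this involution interchanges the conditions ``$\hat x = 1$ on $S$'' and ``$\hat x = 0$ on $S$'' for any $S\subset M(A)$. Observe also that, because $A$ is unital, $M(A)$ is compact Hausdorff, so the closed subsets of $M(A)$ coincide with its compact subsets; this reconciles the quantification over compact $K$ in (1) with the quantification over closed $K$ in (2).

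For the implication (1) $\Rightarrow$ (2), I would start with a closed subset $K\subset M(A)$ avoiding $\phi$; by the preceding observation, $K$ is compact, so (1) furnishes $a\in J_\phi$ with $\hat a(K)\subset\{1\}$ and $\|a\|\le m_\phi$. Setting $x:=1_A-a$, one checks $\hat x\equiv 0$ on $K$; moreover, since $a\in J_\phi$, $\hat a$ vanishes on some open neighbourhood of $\phi$, hence $\hat x\equiv 1$ there. The bound $\|x\|\le 1+m_\phi$ gives condition (2) with $c_\phi:=1+m_\phi$.

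For the reverse implication (2) $\Rightarrow$ (1), one takes a compact $K\subset M(A)\setminus\{\phi\}$, which is automatically closed. Condition (2) produces $x\in A$ with $\|x\|\le c_\phi$, $\hat x = 0$ on $K$, and $\hat x=1$ on some open neighbourhood $U$ of $\phi$. Then $a:=1_A-x$ satisfies $\hat a(K)\subset\{1\}$, and $\hat a\equiv 0$ on $U$, so $\supp(\hat a)\cap\{\phi\}=\emptyset$, i.e.\ $a\in J_\phi$, with $\|a\|\le 1+c_\phi$. Hence (1) holds with $m_\phi:=1+c_\phi$.

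There is no real obstacle here; the lemma is essentially a book-keeping observation. The only subtlety is the small but conceptual one of checking that $a\in J_\phi$ on the ``$\hat x=1$ near $\phi$'' side, which is where the definition of $J_\phi$ (support disjoint from $\{\phi\}$, i.e.\ $\hat a$ vanishing on an open neighbourhood of $\phi$) interacts cleanly with the involution $x\mapsto 1_A-x$.
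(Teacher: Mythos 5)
Your proposal is correct and is essentially the paper's own argument: both directions are handled by the involution $x\mapsto 1_A-x$, with the compactness of $M(A)$ (for unital $A$) reconciling the ``closed'' versus ``compact'' quantifications. The only cosmetic difference is the constant: the paper uses $c_\phi=m_\phi+\|1_A\|$ rather than $m_\phi+1$, since $\|1_A\|=1$ is not assumed, but this is immaterial.
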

\begin{proof}
Let $e$ denote the identity of $A$. 

Suppose (1) holds.
Fix $\phi\in M(A)$, and let $m_\phi>0$ be chosen as in the definition
of bounded relative units. Take $c_\phi=m_\phi+\|e\|$.
Let $K\subset M(A)$ be compact, with $\phi\not\in K$. 
We may choose $a\in J_\phi$ such that $\hat a(K)\subset\{1\}$
and $\|a\|\le m_\phi$. Taking $x=e-a$ we have
$\|x\|\le c_\phi$, $\hat x=0$ on $K$ and $\hat x=1$ near $\phi$.
Thus (2) holds.

The other direction is similar.
\end{proof}

This shows, in particular, that a unital commutative
Banach algebra with bounded relative units is regular.

\begin{proof}[Proof of Theorem \ref{T:BRU}]
For the \lq\lq only if" direction, 
suppose $(X,E,B,\tilde B)$ is
an admissible quadruple,
and 
$\tilde B$ has bounded relative units.
Then we have to show that
$E$ and $B$ have bounded relative units.

First, consider $E$, and  fix $\psi_0\in M(E)$. 
Fix any $x_0\in X$. Since
the evaluation map $f\mapsto f(x_0)$ is continuous from 
$\tilde B\to E$, there exists $\kappa>0$ such that
$\|f(x_0)\|_E\le\kappa\|f\|_{\tilde B}$ for all
$f\in \tilde B$. 

Define $\phi:=\beta(x_0, \psi_0)\in M(\tilde B)$.
By assumption, 
there exists $m>0$ 
such that for each open neighbourhood
$W$ of $\phi$ there exists 
$f\in J_{\phi}$
such that $\hat{f}=1$ off $W$ and $\|f\|_{\tilde B}\leq
m$. 
Let $F\subset M(E)\setminus\{\psi_0\}$ be a compact subset.
Define $L:=\{\beta(x_0,\chi) :
\chi\in F\}$. It is clear that $L$ is a compact
subset of $
M(C(X,E))\backslash\{\phi\}$.
We may choose $f\in J_{\phi}$
such that $\hat{f}(L)\subset\{1\}$ and $\|f\|_{\tilde B}\leq
m$. 
Define $b:=f(x_0)$. Then $b\in E$, $\hat{b}(F)\subset\{1\}$ and
$\|b\|_E\leq\kappa\|f\|_{\tilde B}\leq \kappa m$.

Thus $E$ has bounded relative units.

Now consider $B$, and fix $x_0\in X=M(B)$. Fix any $\psi_0\in M(E)$.
As noted in the proof of
Proposition \ref{P:4Dit-only}, the map $f\mapsto \psi_0\circ f$
is continuous from $\tilde B\to B$, so there exists $\kappa>0$
such that $\|\psi_0\circ f\|_B\le\kappa\|f\|_{\tilde B}$.
So defining $\phi:=
\beta(x_0, \psi_0)\in M(\tilde B)$, we may proceed in
a very similar way to the above, to deduce that $B$
has bounded relative units.

\smallskip
For the \lq if' direction, the key observation 
(for which the authors would like to thank the referee)
uses
the classical automatic continuity theorem of
Shilov \cite[Theorem 2.3.3, p. 192]{Dales}
that each homomorphism from a Banach algebra
into a semisimple commutative Banach algebra
is necessarily continuous. We may apply this
to the two homomorphisms 
$$
 \left\{
\begin{array}{rcl}
E &\to& \tilde B\\
a &\mapsto& 1_X\cdot a
\end{array}
\right\}
\textup{ and }
 \left\{
\begin{array}{rcl}
B &\to& \tilde B\\
f &\mapsto& f\cdot e
\end{array}
\right\}
$$
where $e$ is the identity of $E$, and deduce that there
exist constants $\alpha>0$ and $\gamma>0$ such that
$\|1_X\cdot a\|_{\tilde B}\le\alpha\|a\|_E$ for all $a\in E$ and
$\|f\cdot e\|_{\tilde B}\le\gamma\|f\|_B$ for all $f\in B$.

Now every $\phi\in M(\tilde B)$ is of the form $\psi\circ e_x$
for some $\psi\in M(E)$ and some $x\in X$. Let
$c_x$ and $c_\psi$ be constants as guaranteed by the assumption that
$B$ and $E$ have bounded relative units. Since $M(\tilde B)$
is homeomorphic to $X\times M(E)=M(B)\times M(E)$, given a closed
subset $K$ of $M(\tilde B)$ such that $\phi\not\in K$, 
we may find closed subsets $C\subset X$ and $D\subset M(E)$
such that $K\subset (C\times M(E)) \cup (X\times D)$ and
$x\not\in C$ and $\psi\not\in D$. Then, by hypothesis, there exist
\begin{itemize}
\item $f\in B$ such that $\|f\|_B\le c_x$, $f=0$ on $C$ and
$f=1$ on a neighbourhood of $x$;
\item $a\in E$ such that $\|a\|_E\le c_\psi$, $\hat a=1$ on $D$
and $\hat a=1$ on a neighbourhood of $\psi$.
\end{itemize}
Then the element $f\times a$ of $\tilde B$ satisfies
$\widehat{f\cdot a}=0$ on $(C\times M(E)) \cup (X\times D)$
and 
$\widehat{f\cdot a}=1$ in a neighbourhood of $\phi$. Moreover,
$$ \|f\cdot a\|_{\tilde B} \le \|f\cdot e\|_{\tilde B}
\cdot \|1_X\cdot a\|_{\tilde B} \le \alpha\gamma\|a\|_E\|f\|_B
\le \alpha\gamma c_x c_\phi.$$
Thus $\tilde B$ has bounded relative units. 
\end{proof}

\begin{cor}\label{C:BRU-Tomiyama}
Let $A$ and $B$ be semisimple commutative Banach 
algebras with unit.
Suppose that $C$ is a semisimple Tomiyama product
of $A$ and $B$.
Then 
$C$ has bounded relative units if and only if
both
$A$ and $B$ have bounded relative units. \qed
\end{cor}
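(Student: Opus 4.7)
The plan is to deduce this corollary directly from the two main tools already in hand, namely Proposition \ref{P:4Tom} on Tomiyama products and Theorem \ref{T:BRU} on natural admissible quadruples. There is essentially no new work to do.

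First, I would invoke Proposition \ref{P:4Tom} with the given data: since $C$ is a semisimple Tomiyama product of $A$ and $B$, the quadruple $(M(B), A, B, C)$ is a natural admissible quadruple. Here the role of the space $X$ is played by $M(B)$, the role of the coefficient algebra $E$ is played by $A$, the role of the scalar function algebra is played by $B$, and the role of $\tilde B$ is played by $C$. Note in particular that $\tilde B = C$ is semisimple by hypothesis, so the standing semisimplicity assumption of Theorem \ref{T:BRU} is satisfied.

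Next I would apply Theorem \ref{T:BRU} to this natural admissible quadruple. The theorem gives that $C$ (in the role of $\tilde B$) has bounded relative units if and only if both $A$ (in the role of $E$) and $B$ have bounded relative units. This is exactly the conclusion of the corollary.

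The only point that might warrant a sentence of explanation, rather than a genuine obstacle, is checking that the translation of roles matches: the hypothesis ``$E$ and $B$ semisimple commutative unital'' in the corollary is given, and naturality of the quadruple $(M(B), A, B, C)$ is part of the conclusion of Proposition \ref{P:4Tom}, so no additional verification is needed. Thus the proof is a one-line composition of the two preceding results, and I would simply state it as such and close with \qed.
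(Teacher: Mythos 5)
Your proposal is correct and is precisely the argument the paper intends: the corollary is stated with a terminal \qed because it follows immediately by combining Proposition \ref{P:4Tom} (which makes $(M(B),A,B,C)$ a natural admissible quadruple) with Theorem \ref{T:BRU} (whose semisimplicity hypothesis on $\tilde B=C$ is supplied by the assumption on $C$). Your role-matching check is exactly the right sanity check, and nothing further is needed.
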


\subsection{Proof of Corollaries}
\begin{proof}[Proof of Corollary \ref{C:BRU}]
This is immediate from Theorem \ref{T:BRU}, because
$E$ has bounded relative units if and only if $\hat E$ does, 
and $C(X,\hat E)$ is semisimple, so the 
theorem applies to the quadruple 
\\$(X,\hat E,C(X),C(X,\hat E)$,
and tells us that $C(X,\hat E)$
has bounded relative units if and only if $E$ does.  
But 
$C(X, \hat E)$ is isometrically algebra-isomorphic 
to $\widehat{C(X,E)}$, 
so $C(X,E)$ has bounded relative units
if and only if $E$ does.
\end{proof}

\begin{proof}[Proof of Corollary \ref{C:S-Ditkin}]
This follows from Theorems \ref{T:Ditkin} and
\ref{T:BRU}, since an algebra is a strong
Ditkin algebra if and only if it is Ditkin and has
bounded relative units \cite[pp.417-8]{Dales}.
\end{proof}

As indicated earlier, one direction of Corollary
\ref{C:S-Ditkin} generalises to
natural admissible  quadruples:
\begin{cor}\label{C:4SDit-only}
 Let $(X,E,B,\tilde B)$
be an admissible quadruple.
Suppose $\tilde B$ is strong Ditkin.
Then $E$ and $B$ 
are strong Ditkin.
\end{cor}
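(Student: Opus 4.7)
The plan is to unpack the strong Ditkin hypothesis on $\tilde B$ into its two ingredients and push each one through the admissible quadruple separately. By the characterisation of strong Ditkin algebras recalled from \cite[pp.~417-8]{Dales} and already used to prove Corollary \ref{C:S-Ditkin}, a unital commutative Banach algebra is strong Ditkin if and only if it is both Ditkin and has bounded relative units. So the hypothesis on $\tilde B$ gives us both properties at once, and we aim to transfer each to $E$ and to $B$.

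The Ditkin half is immediate: Proposition \ref{P:4Dit-only} applies to any admissible quadruple $(X,E,B,\tilde B)$, and yields that $E$ and $B$ are Ditkin whenever $\tilde B$ is.

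For the bounded relative units half, I would appeal to the ``only if'' direction of Theorem \ref{T:BRU}. The stated hypotheses of that theorem include naturality of the quadruple and semisimplicity of $\tilde B$, but a careful reading of the proof shows that these are invoked only in the converse (``if'') direction, where the homeomorphism $M(\tilde B)\cong X\times M(E)$ and Shilov's automatic continuity theorem are needed to build the product element $f\cdot a$. The ``only if'' argument, by contrast, uses only the continuity of the evaluation map $f\mapsto f(x_0)\colon\tilde B\to E$ and of the composition map $f\mapsto\psi_0\circ f\colon\tilde B\to B$ (the latter via the Closed Graph theorem, exactly as in the proof of Proposition \ref{P:4Dit-only}), together with the continuity and injectivity of $\beta$. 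All of these are available for every admissible quadruple, so that direction transfers verbatim, yielding that $E$ and $B$ have bounded relative units.

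Combining the two conclusions via the strong Ditkin characterisation recalled above then gives that both $E$ and $B$ are strong Ditkin, as required. The main delicate point is the middle step: justifying that the ``only if'' half of Theorem \ref{T:BRU} really needs neither naturality nor semisimplicity. If one prefers a self-contained presentation, one can simply reproduce the short construction from that proof, obtaining the relative unit $b:=f(x_0)\in E$, respectively $g:=\psi_0\circ f\in B$, from a relative unit $f\in\tilde B$ at the character $\beta(x_0,\psi_0)$.
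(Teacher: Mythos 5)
Your proposal is correct and follows essentially the same route as the paper: the paper's proof of this corollary is exactly ``apply Proposition \ref{P:4Dit-only} and (the \lq\lq only if'' direction of) Theorem \ref{T:BRU}'' together with the characterisation of strong Ditkin algebras as Ditkin algebras with bounded relative units. Your additional observation --- that the \lq\lq only if'' half of Theorem \ref{T:BRU} uses neither naturality nor semisimplicity, so it really does apply under the corollary's weaker hypotheses --- is accurate and in fact makes explicit a point the paper leaves unremarked.
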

\begin{proof}
This follows from Proposition \ref{P:4Dit-only} and
Theorem \ref{T:BRU}. 
\end{proof}

\begin{cor}\label{C:TSD}
Let $A$ and $B$ be semisimple commutative Banach algebras with unit,
and let $C$ be a semisimple Tomiyama product of $A$ and $B$.
Suppose $C$ is strong Ditkin. Then  
so are $A$ and $B$. \qed
\end{cor}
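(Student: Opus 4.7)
The plan is to derive this corollary by combining two results already established earlier in the paper: Proposition \ref{P:4Tom} and Corollary \ref{C:4SDit-only}. First I would apply Proposition \ref{P:4Tom} to the semisimple Tomiyama product $C$ of the semisimple unital commutative Banach algebras $A$ and $B$. Since $C$ is assumed semisimple, that proposition yields that $(M(B), A, B, C)$ is a natural admissible quadruple, with $C$ occupying the role of $\tilde B$, $A$ the role of $E$, and $B$ the role of $B$.

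Next, since the hypothesis of the corollary is precisely that $C=\tilde B$ is strong Ditkin, I would invoke Corollary \ref{C:4SDit-only} applied to this admissible quadruple. That corollary tells us that the strong Ditkin property passes from $\tilde B$ to both the $E$-slot and the $B$-slot of any admissible quadruple, so we immediately conclude that $A$ and $B$ are strong Ditkin, as required.

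There is essentially no obstacle to overcome, since both ingredients are in hand: Proposition \ref{P:4Tom} supplies the admissibility of the quadruple $(M(B),A,B,C)$, and Corollary \ref{C:4SDit-only} already performs the descent of the strong Ditkin property through any admissible quadruple (that latter result itself being built on Proposition \ref{P:4Dit-only} together with Theorem \ref{T:BRU}). The proof is therefore simply a specialization of these two statements, exactly parallel to the way Corollary \ref{C:4Dit-Tom} was obtained for the Ditkin property and Corollary \ref{C:BRU-Tomiyama} for bounded relative units. The only point to verify carefully in the write-up is that the hypotheses of Proposition \ref{P:4Tom} are met, but $A$, $B$, and $C$ being semisimple is explicitly assumed in the statement of the corollary.
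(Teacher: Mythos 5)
Your proposal is correct and matches the paper's intended argument exactly: the corollary is stated with a \qed because it follows immediately by feeding the natural admissible quadruple $(M(B),A,B,C)$ from Proposition \ref{P:4Tom} into Corollary \ref{C:4SDit-only}, precisely as you describe. Your added remark that the semisimplicity and naturalness hypotheses needed for Theorem \ref{T:BRU} (via Corollary \ref{C:4SDit-only}) are supplied by Proposition \ref{P:4Tom} is the right point to check, and it holds.
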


\section{Separating bijections}\label{S:application}
\begin{definition}
Let $A$ and $B$ be two semisimple commutative
Banach algebras with identity. A linear map $T:A\rightarrow
B$ is said to be {\em separating } or {\em disjointness preserving }if
$\coz(Tf)\cap \coz(Tg)=\emptyset$ whenever $f,g\in A$ satisfy
$\coz(f)\cap \coz(g)=\emptyset$. Moreover, $T$ is said to be
{\em biseparating }if it is bijective and both $T$ and $T^{-1}$ are
separating.\\
\end{definition}
Equivalently, a map $T:A\rightarrow B$ is separating if it is linear
and $Tf\cdot Tg\equiv 0$, whenever $f,g\in A$ satisfy $f\cdot
g\equiv 0$.
As an application of Theorem \ref{T:Ditkin}, we obtain: 
\begin{thm}\label{}
Let $X, Y$ be two compact Hausdorff spaces and $E, F$ be unital
commutative semisimple Banach algebras which are 
Ditkin algebras and $ T:C (X,E)\rightarrow
C(Y,F)$ be a separating linear bijection, then
\begin{itemize}
\item[$(i)$] $T$ is continuous,
\item[$(ii)$] $T^{-1}$ is separating, and
\item[$(iii)$] $X\times M(E)$ and $Y\times M(F)$ are homeomorphic.
\end{itemize}
\end{thm}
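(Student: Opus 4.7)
The plan is to reduce the theorem to a standard result about separating linear bijections in the Ditkin setting: \emph{if $A$ and $B$ are semisimple commutative unital Ditkin algebras and $T:A\to B$ is a separating linear bijection, then $T$ is continuous, $T^{-1}$ is separating, and $M(A)$ and $M(B)$ are homeomorphic}. The hypotheses of this general statement are met in our situation because, by Hausner's theorem, $C(X,E)$ and $C(Y,F)$ are semisimple with character spaces $X\times M(E)$ and $Y\times M(F)$, and by Theorem \ref{T:Ditkin} both algebras are Ditkin. Conclusions (i), (ii), (iii) then follow directly, with (iii) being precisely the homeomorphism of character spaces provided by the general result.

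The argument for the general result is standard in the separating-map literature (going back to Font--Hern\'andez and others) and proceeds via a support map $h:M(B)\to M(A)$. For each $\psi\in M(B)$, define $h(\psi)$ as the point of $M(A)$ at which the functional $\psi\circ T$ is ``locally supported'': its existence and uniqueness rest on separateness of $T$ together with regularity (implied by Ditkin on $A$), to rule out multiple support points, and on Ditkin on $A$ combined with compactness of $M(A)$ to rule out empty support. More precisely, $h(\psi)$ is characterised as the unique $\phi\in M(A)$ such that every open neighbourhood $U$ of $\phi$ contains $\coz(\hat f)$ for some $f\in A$ with $(\psi\circ T)(f)\neq 0$. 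A symmetric construction applied to $T^{-1}$, using Ditkin on $B$, yields a continuous $k:M(A)\to M(B)$; one shows $h$ and $k$ are mutually inverse homeomorphisms, which simultaneously verifies that $T^{-1}$ is separating.

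Continuity of $T$ follows from the Closed Graph theorem combined with the support-map structure, which identifies $\psi\circ T$ (for each $\psi\in M(B)$) as a continuous linear functional on $A$ locally determined at $h(\psi)$; semisimplicity of $B$ then upgrades pointwise agreement under all $\psi\in M(B)$ to actual equality, so $f_n\to f$ and $Tf_n\to g$ yield $g=Tf$. The principal technical obstacle is the interlocking nature of the support-map construction: the Ditkin hypotheses on both $A$ and $B$ must be used in coordinated fashion to establish the homeomorphism $h$ and the biseparating property of $T$, since each step of the standard argument depends on the others, and the automatic continuity of each $\psi\circ T$ rests on having $h$ already in hand. Careful sequencing of these mutually dependent steps is the heart of the proof.
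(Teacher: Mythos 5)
Your proposal is correct and follows essentially the same route as the paper: the paper's proof is exactly the reduction you describe, citing Font's theorem on separating bijections between semisimple regular Banach function algebras satisfying Ditkin's condition, together with Theorem \ref{T:Ditkin} and Hausner's identification of $M(C(X,E))$ with $X\times M(E)$. Your additional sketch of the support-map argument is just an outline of the proof of the cited result, which the paper does not reproduce.
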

\begin{proof}
Use \cite[Theorem 1]{Juan J.Font} and Theorem \ref{T:Ditkin}.

\end{proof}

\begin{rem} The results of this paper may be extended to
semisimple commutative Banach algebras without identity
by the device of adjoining a unit.  We have confined attention
to algebras with unit, to avoid clutter.
\end{rem}

\subsection*{Acknowledgments}
This paper was begun while the first-named author was visiting
National University of Ireland, Maynooth; she would like to
thank sincerely 
members of 
the Department of Mathematics and Statistics 
for their hospitality and kindness.  The authors are grateful to the
anonymous referee for suggestions that significantly improved the
paper.  In particular, the referee supplied a key part of
the proof of Theorem \ref{T:BRU} and the present proof of
Corollary \ref{C:BRU}.


\begin{thebibliography}{99}
\bibitem{Bierstedt}{K.D. Bierstedt, }Introduction to Topological Tensor Products. Lecture Notes, Mathematical Institute, University of Paderborn, 2007.
\bibitem{Conway}{J.H.~Conway, }
A Course in Functional Analysis. Springer, New York, 1985.
\bibitem{Dales}{H. G. Dales, }Banach algebras and automatic
continuity, LMS monographs 24, Clarendon Press, Oxford, 2000.
\bibitem{Juan J.Font}{Juan J. Font,} Automatic continuity of certain
isomorphisms between regular Banach function algebras, Glasgow Math.
J. 39(1997), 333-343.
\bibitem{Hausner}{A. Hausner, }Ideals in a certain Banach algebra, Proc. Amer. Math. Soc. 8 (1957),
No. 2, 246-249.
\bibitem{Kaniuth}{E. Kaniuth, }A Course in Commutative Banach Algebras. Springer, 
New York, 2009.
\bibitem{NO}{A. Nikou and A.G. O'Farrell,}
Banach algebras of vector-valued functions, 
Glasgow Math. J. 56 (2014) 419-426.
\bibitem{Ryan}{R. Ryan, } Introduction to Tensor Products of Banach
Spaces. Springer, New York, 2002.
\bibitem{Tom}{J. Tomiyama,}
Tensor products of commutative Banach algebras,
Tohoku Math. J. 12 (1960) 147-154.
\end{thebibliography}
\end{document}